\newtheorem{thm}{Theorem}
\newtheorem*{que*}{Question}
\newtheorem{prop}[thm]{Proposition}
\newtheorem{lem}[thm]{Lemma}
\newtheorem{remark}[thm]{Remark}
\newtheorem{cor}[thm]{Corollary}
\begin{document}

\title{Divide knots of maximal genus defect}

\author{Livio Liechti}
\address{Department of Mathematics\\
University of Fribourg\\
Chemin du Mus\'ee 23\\
1700 Fribourg\\Switzerland}
\email{livio.liechti@unifr.ch}

\begin{abstract} 
We construct divide knots with arbitrary smooth four-genus but topological 
four-genus equal to one. In particular, for strongly quasipositive fibred knots, the 
ratio between the topological and the smooth four-genus can be arbitrarily close to zero. 
\end{abstract}

\maketitle

\section{Introduction}

The smooth four-genus of algebraic knots equals the usual Seifert genus by 
Kronheimer and Mrowka's resolution of the Thom conjecture~\cite{KM}. This 
is far from being true for the topological four-genus. For instance, if the two 
parameters~$p$ and~$q$ tend to infinity, the ratio between the topological and the 
smooth four-genus of the torus knot~$T_{p,q}$ is expected to approach~$\frac{1}{2}$. 
At the moment, the best upper bound for this limit is~$\frac{14}{27}$, provided by 
Baader, Banfield and Lewark~\cite{BBL}. On the 
other hand, it is well-known that the limit cannot lie below~$\frac{1}{2}$.
This follows from Gordon, Litherland and Murasugi's signature formulas for torus 
knots~\cite{GLM} and the fact that the signature invariant is a lower bound for twice 
the topological four-genus by a result of Kauffman and Taylor~\cite{KT}. 
By Shinohara's cabling relation~\cite{Sh}, the lower bound~$\frac{1}{2}$ for the 
ratio between the topological and the smooth four-genus extends to the class 
of all algebraic knots. For the more general class of positive braid knots, this 
ratio is bounded from below by~$\frac{1}{8}$ due to a result of Feller~\cite{Feller}, 
and even for positive knots, it is bounded from below by~$\frac{1}{12}$ due to a 
result of Baader, Dehornoy and the author~\cite{BDL}.

In this article, we consider another natural generalisation of algebraic knots: divide knots, 
introduced by A'Campo~\cite{AC4, AC3}. This class of knots lies within the class of strongly 
quasipositive fibred knots, but not every divide knot is positive. 
Our aim is to show that for divide knots, and hence also for strongly quasipositive fibred knots, 
the ratio between the topological and the smooth four-genus 
can be arbitrarily close to zero. More precisely, we prove the following result.

\begin{thm} 
\label{trefoil_thm}
For every positive integer~$g$, there exists a divide knot with smooth four-genus equal 
to~$g$ and with topological four-genus equal to one. 
\end{thm}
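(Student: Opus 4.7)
The plan is to exhibit an explicit family of divides $P_g$ whose associated knots $K_g=L(P_g)$ have Seifert genus $g$ and topological four-genus exactly one. I would start from the trefoil divide (a curve with a single double point) and insert $g-1$ auxiliary double points in a controlled manner; the guiding principle is that each extra double point should contribute a homology class to the canonical fibre surface that can be represented geometrically by a topologically unknotted curve in $S^3$ with trivial Alexander polynomial.

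The smooth four-genus part is then automatic. By A'Campo's construction, the canonical fibre surface of $L(P)$ has genus equal to the number of double points of $P$, so the Seifert genus of $K_g$ equals $g$. Since $K_g$ is strongly quasipositive and fibred, Rudolph's theorem (ultimately a consequence of Kronheimer--Mrowka) yields $g_4^{\mathrm{sm}}(K_g)=g$.

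For the lower bound $g_4^{\mathrm{top}}(K_g)\geq 1$, I would use the signature. A'Campo provides an explicit Seifert matrix for $L(P)$ in terms of the combinatorics of the divide, which should allow one to verify $\sigma(K_g)=\pm 2$ for the chosen family, independently of $g$. The Kauffman--Taylor bound $g_4^{\mathrm{top}}(K)\geq|\sigma(K)|/2$ then forces $g_4^{\mathrm{top}}(K_g)\geq 1$.

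The crux is the upper bound $g_4^{\mathrm{top}}(K_g)\leq 1$. The plan is to locate $g-1$ pairwise disjoint, non-separating simple closed curves on the fibre surface whose homology classes span a rank $g-1$ isotropic subspace of the Seifert form and which, viewed individually as knots in $S^3$, are unknotted (or at least have trivial Alexander polynomial). Freedman's disk theorem then yields pairwise disjoint locally flat disks bounded by these curves in $B^4$, and topological surgery on the fibre surface along these disks converts it into a locally flat genus-one surface bounded by $K_g$. The main obstacle is arranging the auxiliary double points of $P_g$ so that the two requirements on the curves coexist: algebraic isotropy in the Seifert form, and a concrete geometric representation by topologically slice curves directly readable from the divide. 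If this turns out to be too rigid to achieve directly, a fallback is to prove the weaker statement that $K_g\,\#\,\overline{T_{2,3}}$ has trivial Alexander polynomial, which by Freedman's theorem makes it topologically slice, and hence exhibits $K_g$ as topologically concordant to the trefoil.
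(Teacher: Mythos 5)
Your treatment of the smooth four-genus (Rudolph, ultimately Kronheimer--Mrowka) and of the lower bound (signature at least two plus the Kauffman--Taylor inequality) coincides with the paper's. The gap lies in the crucial upper bound. First, you never actually exhibit the divides: the paper works with the explicit snail divides $K_n$ and computes their Seifert form in Ishikawa's basis via Lemma~\ref{Divideform}. More seriously, the step ``the curves are individually unknotted (or have trivial Alexander polynomial), hence Freedman's disc theorem yields pairwise disjoint locally flat discs'' is not a valid deduction. Freedman's theorem produces a locally flat slice disc for each such knot separately; arranging the discs to be pairwise disjoint is precisely the question of topological sliceness of the \emph{link} formed by the curves, which does not follow from componentwise sliceness and vanishing linking numbers (this is the territory of the open surgery problems for free groups, cf.\ Bing doubles). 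In your situation this difficulty is unavoidable: since $g_4^{\mathrm{sm}}(K_g)=g$, the $g-1$ curves you seek can never form a smoothly slice link with surface framing zero (otherwise the compression would be smooth and would contradict the smooth genus), so the disjoint-disc step is exactly where all the four-dimensional subtlety is concentrated, and you have supplied no argument for it. The paper sidesteps this by an algebraic criterion, Proposition~\ref{bound} from~\cite{BFLL}: one exhibits a subgroup $V$ of rank $2n-2$ (twice the rank of your isotropic subspace) on which a matrix $A$ of the Seifert form has $\det(tA-A^\top)$ a unit in $\mathbf{Z}[t^{\pm1}]$; that proposition packages a single application of Freedman's disc theorem, and the hypothesis is verified for the snail divides by a direct computation with the divide Seifert form.

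Your fallback is not viable either: Alexander polynomials are multiplicative under connected sum and unchanged by mirroring and reversal, so $\Delta_{K_g\#\overline{T_{2,3}}}\doteq\Delta_{K_g}\cdot\Delta_{T_{2,3}}$, which is never a unit because $\Delta_{T_{2,3}}(t)=t^2-t+1$ is not (and, $K_g$ being fibred of genus $g$, $\Delta_{K_g}$ has degree $2g$). Hence no choice of divide can make $K_g\#\overline{T_{2,3}}$ have trivial Alexander polynomial, and this route cannot produce a topological concordance to the trefoil.
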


This result is optimal in the sense that the topological four-genus of every nontrivial divide knot is 
at least one. This follows from the fact that the signature of a nontrivial divide knot is bounded from 
below by two. 

Our proof of Theorem~\ref{trefoil_thm} consists of the following steps. 
First of all, the smooth four-genus of a divide knot equals the 
usual Seifert genus. This is the content of Rudolph's extension of the Thom conjecture to strongly 
quasipositive knots~\cite{Rudolph}. Another proof for divide knots is provided by A'Campo~\cite{AC4}. 
Hence, the only point is to show that the topological four-genus equals one. 
We do this for the following explicit examples of divide knots of growing Seifert genus: 
let~$K_n$ be the divide knot obtained by the snail divide with~$n$ double points, 
see Figure~\ref{snaildivides} for the example~$n=1,2,3,4$. We refer to Section~\ref{basics} for the 
definition of divide knots. The first two knots of the sequence are the trefoil knot~$3_1$ and the 
knot~$10_{145}$. 
\begin{figure}[h]
\begin{center}
\def\svgwidth{300pt}
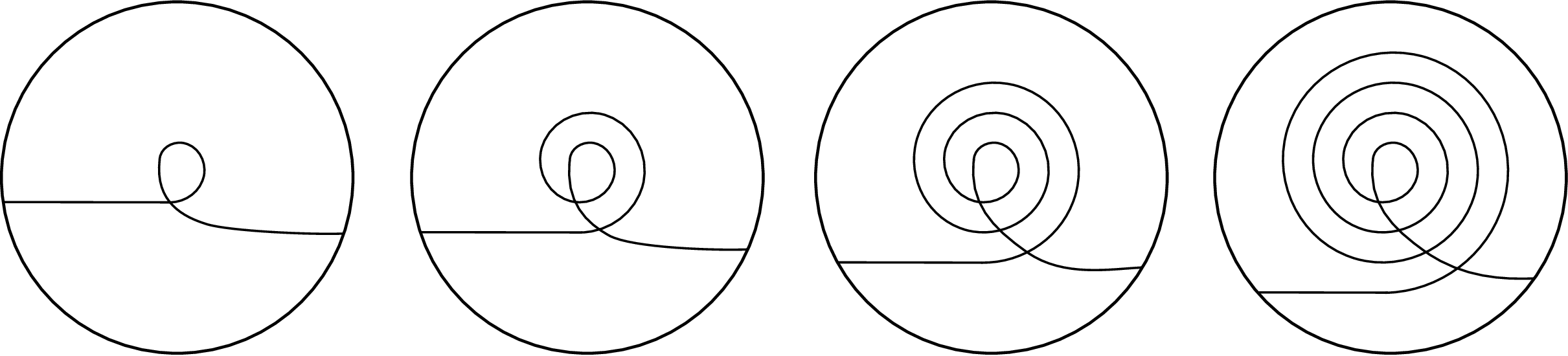
\caption{The snail divides with one, two, three and four double points, from left to right.}
\label{snaildivides}
\end{center}
\end{figure}
We analyse the Seifert form of the canonical genus-minimising 
Seifert surface of~$K_n$, and use an algebraic criterion due to Baader, Feller, Lewark and the 
author~\cite{BFLL} to bound the topological four-genus from above. Unsurprisingly, this criterion is 
based on Freedman's disc theorem~\cite{Free}. 

We state the consequence we get for the minimal ratio between the smooth and the topological 
four-genus separately as follows.

\begin{cor}
The ratio between the topological and the smooth four-genus can be arbitrarily close to zero  
in any of the following classes of knots: divide knots, positive trefoil plumbings, strongly quasipositive 
fibred knots.
\end{cor}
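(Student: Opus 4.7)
The corollary is a direct packaging of Theorem~\ref{trefoil_thm} together with two inclusion-type statements, so the plan is to spell out these inclusions rather than to do any new genus computation. Set $g_4^{\mathrm{top}}(K_n)=1$ and $g_4(K_n)=g_n\to\infty$ as provided by Theorem~\ref{trefoil_thm}; the ratio $g_4^{\mathrm{top}}(K_n)/g_4(K_n)=1/g_n$ already tends to zero along the sequence $K_n$ of snail divide knots. It remains to explain why this same sequence lies in each of the three declared classes.

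First, for divide knots themselves there is nothing to show: by construction $K_n$ is the divide knot associated with the snail divide having $n$ double points. For strongly quasipositive fibred knots, I would invoke the containment, already recorded in the introduction, that every divide knot is strongly quasipositive and fibred; this is A'Campo's theorem, so no further work is required beyond citing it.

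The one step that demands an actual argument is the positive trefoil plumbing claim. My plan is to exhibit an explicit plumbing decomposition of the canonical A'Campo fibre surface~$\Sigma_n$ of~$K_n$ as an iterated Murasugi sum of positive trefoil fibre surfaces. Starting from the snail divide picture in Figure~\ref{snaildivides}, I would first describe the standard basis of vanishing cycles that A'Campo attaches to the double points and the arcs between them, and then identify a chain of three of these cycles realising each trefoil summand. Passing from~$n$ to~$n+1$ corresponds to appending one further turn of the spiral; on the level of fibre surfaces this adds one more double point and a new arc, and my plan is to check that locally this is precisely the model of plumbing a positive trefoil fibre surface along a properly embedded square in~$\Sigma_n$. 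Iterating yields~$\Sigma_n$ as a plumbing of $n$ positive trefoil fibre surfaces, so~$K_n$ is a positive trefoil plumbing.

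The main obstacle is the verification in the last paragraph: reading the plumbing structure off the divide picture requires being careful with the concrete identification of A'Campo's surface with the relevant Murasugi sum. Once this local-to-global check is done, the three classes in the corollary are simultaneously populated by the sequence~$\{K_n\}$, and the ratio estimate $1/g_n\to0$ from Theorem~\ref{trefoil_thm} yields the statement.
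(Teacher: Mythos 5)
Your overall structure is the same as the paper's: the sequence of snail divide knots $K_n$ from Theorem~\ref{trefoil_thm} has ratio $1/n\to 0$, membership in the class of divide knots is by construction, and membership in the class of strongly quasipositive fibred knots is the known containment (fibredness is A'Campo~\cite{AC4}; strong quasipositivity follows for instance from Ishikawa's realisation of the fibre surface as a plumbing of positive Hopf bands~\cite{Ishikawa}). The paper treats the one remaining point — that the $K_n$ are positive trefoil plumbings — purely by citation: it invokes a result of the author in the first arXiv version of~\cite{L1}, and does no geometric work.

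This is exactly where your proposal has a genuine gap. You correctly identify the trefoil-plumbing claim as the only step requiring an argument, but what you offer is a plan, not a proof: ``check that locally this is precisely the model of plumbing a positive trefoil fibre surface'' is the entire mathematical content of the claim, and it is not routine. What the divide picture hands you via Ishikawa's construction is a decomposition of the fibre surface of $K_n$ into $2n$ positive Hopf bands (one per double point and one per inner region), not into $n$ trefoil fibre surfaces; each new turn of the spiral contributes two Hopf-band plumbings, and regrouping an iterated Hopf plumbing into an iterated trefoil plumbing requires verifying that consecutive pairs of core curves together with their plumbing regions actually assemble into the trefoil fibre plumbed along an embedded square — plumbing decompositions cannot in general be re-bracketed at will. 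As written, your argument would not survive this verification step without substantial additional work; either carry out that local analysis explicitly (in effect reproving the relevant statement of~\cite{L1}), or do as the paper does and cite it.
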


The only nontrivial extension with respect to Theorem~\ref{trefoil_thm} is the statement for 
positive trefoil plumbings. However, this follows directly from the fact that the examples~$K_n$ are
positive trefoil plumbings due to a result of the author~\cite{L1}. Unfortunately, this result features only 
in the first arxiv version of the article, and is not contained in the published version.

\begin{remark}\emph{
While the knots~$K_n$ might as well be the only divide knots with topological four-genus equal to one, 
there are probably infinitely many strongly quasipositive fibred knots of this kind, for any large enough 
Seifert genus: realising~$K_n$ abstractly as a positive trefoil plumbing, it should be possible to plumb 
the last trefoil in infinitely many ways but with the same Seifert form, similarly to Misev's construction of 
infinitely many strongly quasipositive fibred knots with the same Seifert form~\cite{Misev}. 
}\end{remark}

If one drops the assumption of fibredness, 
a much stronger result is known: 
Borodzik and Feller recently showed that every 
knot is topologically concordant to a strongly quasipositive one~\cite{BF}.
However, such a result cannot be expected to hold restricted to strongly quasipositive fibred knots, 
as it distinctly does not do so in the setting of algebraic concordance. A partial result in this direction is 
due to Yozgyur~\cite{Yozgyur}: there exist knots that are not topologically 
concordant to any L-space knot (which form a subclass of strongly quasipositive fibred knots).
\smallskip

\noindent
\textbf{Organisation}. We provide basic definitions and properties of divide knots in Section~\ref{basics}, 
and we prove Theorem~\ref{trefoil_thm} in Section~\ref{proof}. 
\smallskip

\noindent
\textbf{Acknowledgements}. I thank Sebastian Baader for asking about the topological four-genus of divide knots, 
and I thank Peter Feller for helpful comments and references. I also thank the anonymous referees for their close 
reading and their constructive comments.

\section{The Seifert form of a divide knot}
\label{basics}

\subsection{Divide knots}
We briefly recall the definition of divide knots. For more details, we refer to A'Campo's original articles~\cite{AC4, AC3} 
or Baader and Dehornoy~\cite{BD}.
Let~$D$ be the closed unit disc and let~$P$ be the image of a relative smooth arc immersed generically in~$D$. 
We identify the tangent bundle~$T(D)$ to~$D$ with the product~$D\times\mathbf{R}^2$ and consider its 
unit sphere \[ST(D)=\{(x,v) \in T(D): ||x||^2+||v||^2 = 1\} \cong \mathbf{S}^3.\]
The divide knot~$K(P)$ is defined to be the set of vectors in~$ST(D)$ based at and tangent to~$P$.

Divide knots are fibred by a result of A'Campo~\cite{AC4}. In particular, they have a canonical 
genus-minimising Seifert surface~$\Sigma$. It is obtained as follows. Colour the complement~$D\setminus P$ 
into black and white regions in checkerboard fashion. The surface~$\Sigma$ then consists of the following 
points. For every ordinary point~$p$ of~$P$, it contains all the vectors of~$ST(D)$ that are based at~$p$ and do not 
point towards a white region. For every double point~$p$, it contains all the vectors of~$ST(D)$ that are based at~$p$. 
Figure~\ref{snail2fibre} is inspired by Figure~1.3 by Baader and Dehornoy~\cite{BD}, and shows the canonical 
Seifert surface for the snail divide with two double points. 

\begin{figure}[h]
\begin{center}
\def\svgwidth{270pt}
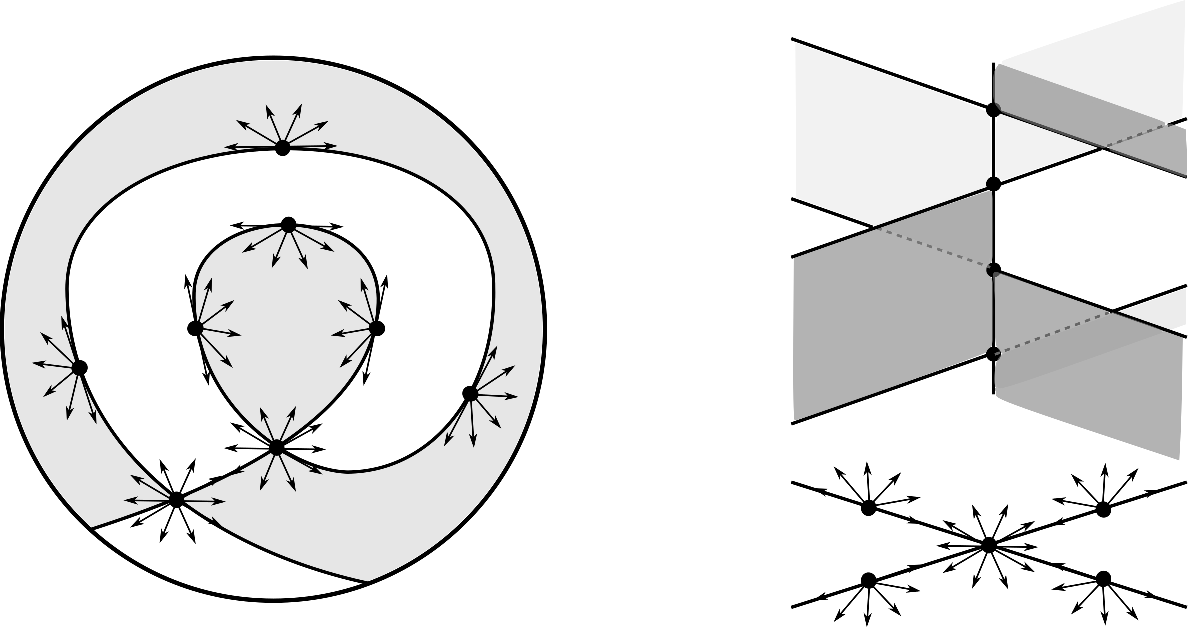
\caption{On the left, an illustration of the canonical Seifert surface~$\Sigma$ of the snail 
divide with two double points. On the right, a portion of the surface is depicted around a double point, 
where the height is given by the argument of the corresponding vector, measured from~$-\pi$ to~$\pi$. 
The top and the bottom of the surface have to be identified.}
\label{snail2fibre}
\end{center}
\end{figure}

Ishikawa realised the canonical Seifert surface as a plumbing of positive Hopf bands~\cite{Ishikawa}. More precisely, 
the core curve of the Hopf bands in the plumbing construction are the following: There is one basis curve for 
each double point~$p$, consisting of orienting its lift to~$ST(D)$. We choose to orient the vectors clockwise, 
so that the curve corresponding to the double point in Figure~\ref{snail2fibre} is oriented downwards. 
There is one basis curve for each inner region of~$D\setminus P$, consisting of a curve in~$ST(D)$ 
that projects to a curve running clockwise around the boundary of the region. If the region is black, 
then we choose a lift that always points into the region; if the region is white, we choose a lift that always 
points outside of the region. Figure~\ref{Ialg} depicts portions of these basis curves around a double point of the divide.

\begin{figure}[h]
\begin{center}
\def\svgwidth{300pt}
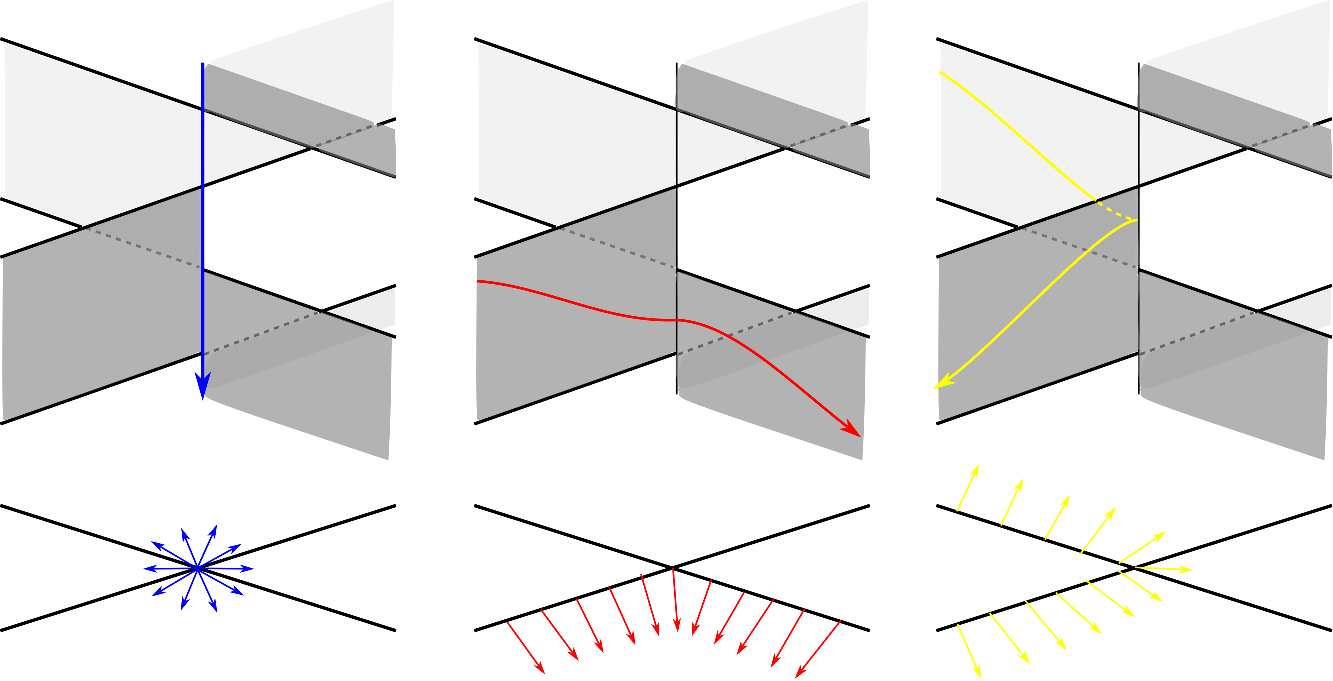
\caption{Portions of Ishikawa's basis curves: on the left corresponding to a double point (blue),
in the middle corresponding to a black region (red), and on the right corresponding to a white region (yellow).}
\label{Ialg}
\end{center}
\end{figure}

\subsection{The Seifert form}
For a Seifert surface~$\Sigma$ of a knot~$K$, the associated Seifert form~$S$ is defined as follows. 
For two simple closed curves~$\alpha$ and~$\beta$ in~$\Sigma$, define~$S(\alpha,\beta)$ to be the 
linking number of~$\alpha$ with~$\beta$, where~$\beta$ is  
slightly pushed off~$\Sigma$ in the positive normal direction. 
Choosing a basis of~$\mathrm{H}_1(\Sigma;\mathbf{Z})$ that consists entirely of simple closed curves, 
this definition extends to a bilinear form on~$\mathrm{H}_1(\Sigma;\mathbf{Z})$, the Seifert form~$S$.

Now, let~$K$ be a divide knot with canonical Seifert surface~$\Sigma$. We describe the Seifert 
form~$S$ by the values it takes on the basis of~$\mathrm{H}_1(\Sigma;\mathbf{Z})$ 
obtained by Ishikawa's plumbing construction~\cite{Ishikawa}. In order to do so, we need to choose 
an orientation of~$\Sigma$. Our convention is that the projection to~$D$ of the positive normal 
vector to~$\Sigma$ points into the adjacent black region. With this convention, the darker side 
of the surface depicted in Figure~\ref{Ialg} is the positive one.

\begin{lem}
\label{Divideform}
Let~$\alpha$ be a curve corresponding to a black region~$B$ of the divide, let~$\beta$ be 
a curve corresponding to a white region~$W$ of the divide, and let~$\gamma$ be a curve 
corresponding to a double point~$v$ of the divide. 
Assume further that~$B$ and~$W$ have~$k$ common edges, that~$v$ appears~$n$ and~$m$
times, respectively, when tracing the boundary of~$B$ and~$W$, respectively.
Then we have
\begin{enumerate}
\item[(i)] $S(\alpha,\alpha) = S(\beta,\beta) = S(\gamma,\gamma) = 1,$
\item[(ii)] $S(\beta, \alpha) = S(\gamma, \alpha) = S(\beta, \gamma) = 0,$
\item[(iii)] $S(\alpha, \beta) = k,$
\item[(iv)] $S(\alpha,\gamma) = n,$
\item[(v)] $S(\gamma, \beta) = m.$
\end{enumerate}
\end{lem}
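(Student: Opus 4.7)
The plan is to read off the Seifert form directly from Ishikawa's plumbing description of~$\Sigma$, in which each basis curve $\alpha,\beta,\gamma$ appears as the core of a positive Hopf band. On a surface built from positive Hopf plumbings, the Seifert pairing on the core basis is completely determined by two pieces of local data: the self-pairing of each core and the pattern of plumbings between the bands. So the lemma will reduce to a combinatorial intersection count together with one local model computation per type of intersection.

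The diagonal entries are immediate: each core of a positive Hopf band has self-pairing~$+1$, yielding $S(\alpha,\alpha)=S(\beta,\beta)=S(\gamma,\gamma)=1$.

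For the off-diagonal entries I would first identify the plumbing pattern. Inspecting Ishikawa's construction shows that two basis cores can be isotoped to be disjoint on~$\Sigma$ in all cases except three: a black-region core and a white-region core meet once along each shared edge of the two regions; a region core meets a double-point core once for every appearance of the vertex on the boundary of that region. Same-colored region cores, and distinct double-point cores, are disjoint. The geometric intersection numbers on~$\Sigma$ are therefore $|\alpha\cap\beta|=k$, $|\alpha\cap\gamma|=n$, and $|\gamma\cap\beta|=m$. At each intersection of two cores $c_1,c_2$ coming from a single Hopf plumbing, the standard local model gives $\{S(c_1,c_2),S(c_2,c_1)\}=\{1,0\}$, with the non-zero entry determined by the signs of the intersection and the ambient orientation.

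The final step is to verify that the orientation convention---the positive normal of~$\Sigma$ projects into the adjacent black region---forces the non-zero contribution to fall on $S(\alpha,\beta)$, $S(\alpha,\gamma)$, and $S(\gamma,\beta)$ at every plumbing square, and in particular that all intersections of a fixed pair contribute with the same sign. This is the main obstacle: the formulas $k,n,m$ are honest counts rather than alternating sums precisely because the same local picture recurs at every plumbing of a given type. Once this is reduced to a single local verification for each of the three cases (shared edge, vertex on black boundary, vertex on white boundary), running the Hopf-plumbing computation and summing yields the three claimed values, while the reverse pairings $S(\beta,\alpha)$, $S(\gamma,\alpha)$, $S(\beta,\gamma)$ fall out as zero from the same local model.
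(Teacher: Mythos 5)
Your approach is genuinely different from the paper's, but as written it has a gap at its central step. You reduce the off-diagonal entries to a count of intersection points of the core curves together with a ``standard local model'' assigning $\{1,0\}$ to each intersection, and you implicitly assume that cores which are disjoint on~$\Sigma$ pair trivially. Seifert form entries are global linking numbers, not sums of local contributions at intersection points: only the difference $S(x,y)-S(y,x)$ is computed by the algebraic intersection number, while $S(x,y)+S(y,x)$ records how the bands link in the ambient three-sphere away from the intersections (for instance, two disjoint parallel copies of the core of a positive Hopf band have pairing $1$, not $0$). So the claim that the pairing localizes at intersections, and the claim that the orientation convention pushes all the weight onto $S(\alpha,\beta)$, $S(\alpha,\gamma)$, $S(\gamma,\beta)$, are exactly the content of the lemma and are not established; you even flag the latter as ``the main obstacle'' and then defer it rather than carry it out.

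The paper supplies precisely the missing global input in two steps: the symmetrised form $S+S^\top$ is quoted from Baader--Dehornoy, so it suffices to prove the diagonal values and the vanishing of $S(\beta,\alpha)$, $S(\gamma,\alpha)$, $S(\beta,\gamma)$; and these vanishings follow from a divide-specific linking argument --- pushing $\alpha$ off towards the black region (resp.\ $\beta$ towards the white region) makes the relevant projections to~$D$ disjoint, and loops in $ST(D)$ with disjoint projections have linking number zero because every diameter of~$D$ lifts to a $2$-sphere separating them. If you want to keep your plumbing-theoretic route, the correct tool is the triangularity of the Seifert form under Murasugi sum: in Ishikawa's iterated plumbing, one whole side of each cross pairing vanishes, with the side determined by the order in which the bands are plumbed, and the other side is then forced to equal the signed intersection number of the cores. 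But you would still have to identify the plumbing order for the three types of pairs, and verify that all intersection points of a fixed pair contribute with the same sign so that the counts $k$, $n$, $m$ are not alternating sums; neither verification appears in the proposal.
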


We note that while the values of~$n$ and~$m$ are either~$0$,~$1$ or~$2$ by construction, 
the value of~$k$ can be arbitrarily large.

\begin{proof}
The statement~(i) follows directly from the fact that the curves~$\alpha,\beta$ 
and~$\gamma$ are the core curves of positive Hopf bands in Ishikawa's plumbing construction~\cite{Ishikawa}. 
The statement~(ii) follows from the following observation, given by Baader and Dehornoy~\cite{BD}:
if the projections of two loops in~$ST(D)$ to~$D$ are disjoint, then the loops have linking number zero. 
The reason for this is that every diameter in~$D$ lifts to a 2-sphere in~$ST(D)$, and
hence loops with disjoint projections are separated by a 2-sphere and have zero linking.
Now, if we push~$\alpha$ off~$\Sigma$ in the positive direction, that is, towards the black face, 
then its projection becomes disjoint with the projections of~$\beta$ and~$\gamma$. This explains 
the first two zeros in the statement~(ii). For the last zero, we note that pushing~$\gamma$ 
off~$\Sigma$ in the positive direction yields the same linking number as pushing~$\beta$ 
off~$\Sigma$ in the negative direction. But if we push~$\beta$ in the negative direction, 
that is, towards the white face, then again the projections of~$\beta$ and~$\gamma$ to~$D$ 
become disjoint. 

We now argue how to obtain~(iii),(iv) and~(v) from~(i) and~(ii).
For any pair of transverse, oriented, simple closed curves~$a$ and~$b$ in~$\Sigma$,
we have~$$i_\mathrm{alg}(a,b) = S(b,a) - S(a,b),$$ where the algebraic intersection number~$i_\mathrm{alg}$
is a signed count of the number of intersections: an intersection gets counted with a positive sign if the orientation of the intersection matches the orientation 
of the surface, and with a negative sign otherwise. Given the definition of the basis curves of type~$\alpha, \beta$ and~$\gamma$, it is a direct verification 
that the values of the algebraic intersection numbers are given by~$i_\mathrm{alg}(\alpha,\beta) = -k$,~$i_\mathrm{alg}(\alpha,\gamma) = -n$ 
and~$i_\mathrm{alg}(\gamma,\beta) = -m$, compare with Figure~\ref{Ialg} (and recall that with our 
convention, the darker side of the surface is the positive one).    
In particular, by the above formula for the algebraic intersection number,~(iii),(iv) and~(v) follow from the values computed in~(i) and~(ii). 
\end{proof}

\section{Proof of Theorem~\ref{trefoil_thm}}
\label{proof}

Let~$K_n$ be the divide knot obtained by the {snail divide} with~$n$ double points, as defined in Figure~\ref{snaildivides}. 
Knowing Ishikawa's basis for the first homology of the canonical genus-minimising Seifert surface~$\Sigma$, we directly 
see that~$K_n$ is of genus~$n$. The technical ingredient to the proof of Theorem~\ref{trefoil_thm}
is the following.

\begin{prop}
\label{Alextrivial}
Let~$\Sigma$ be the canonical Seifert surface for the knot~$K_n$.
Then, the first homology~$\mathrm{H}_1(\Sigma;\mathbf{Z})$ has a subgroup~$V$ of rank~$2n-2$ 
such that for a matrix~$A$ of the Seifert form of~$\Sigma$ 
restricted to~$V$,~$\det(tA-A^\top)$ is a unit in~$\mathbf{Z}[t^{\pm1}]$.
\end{prop}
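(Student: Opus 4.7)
I want to use Lemma~\ref{Divideform} to analyse the Seifert matrix of~$K_n$ in Ishikawa's basis explicitly, and then to exhibit a sublattice $V \subset \mathrm{H}_1(\Sigma;\mathbf{Z})$ of rank~$2n-2$ on which the restricted Seifert form is ``Alexander-trivial'' in the required sense.

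\textbf{Step 1.} Ishikawa's plumbing gives a basis of $\mathrm{H}_1(\Sigma;\mathbf{Z})$ consisting of $n$ curves $\gamma_1,\ldots,\gamma_n$, one per double point of the snail (indexed from outer to inner), together with $n$ curves associated to the inner regions of $D\setminus P$. Of the latter, $p$ correspond to black regions and $q$ to white regions, with $p+q=n$. Ordering the basis as $(\alpha_1,\ldots,\alpha_p,\gamma_1,\ldots,\gamma_n,\beta_1,\ldots,\beta_q)$, Lemma~\ref{Divideform} shows that the Seifert matrix~$A$ is upper block triangular:
\[
A \;=\; \begin{pmatrix} I_p & N & K \\ 0 & I_n & M \\ 0 & 0 & I_q \end{pmatrix}.
\]
Here $K$, $N$, $M$ are non-negative integer matrices whose entries record the incidence data of the snail (black-white adjacencies, and occurrences of double points on boundaries of black, respectively white, regions). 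All three can be read off directly from the picture.

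\textbf{Step 2.} Guided by the chain-like structure of the nested inner regions, I would exhibit $V$ as the sublattice spanned by $n-1$ ``double-point differences'' $\gamma_{i+1}-\gamma_i$ for $i=1,\ldots,n-1$, together with $n-1$ ``region differences'', one for each consecutive pair of inner regions, with signs dictated by the alternating colouring and by the asymmetry between $S(\alpha,\beta)$ and $S(\beta,\alpha)$ in Lemma~\ref{Divideform}. The aim is to produce $2n-2$ classes which, after an integral change of basis, cast $A|_V$ in the block form
\[
A|_V \;=\; \begin{pmatrix} 0 & B \\ 0 & 0 \end{pmatrix},
\]
where $B$ is an $(n-1)\times(n-1)$ integer matrix that a direct inspection of the incidence data reveals to be upper triangular with diagonal entries~$\pm 1$, hence unimodular over~$\mathbf{Z}$.

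\textbf{Step 3.} Granting this block form, one computes
\[
\det(tA|_V-A|_V^\top)\;=\;\det\begin{pmatrix} 0 & tB \\ -B^\top & 0 \end{pmatrix}\;=\;\pm t^{n-1}\det(B)^2\;=\;\pm t^{n-1},
\]
which is a unit in $\mathbf{Z}[t^{\pm 1}]$, as required.

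\textbf{Expected obstacle.} The delicate point is Step~2: choosing $V$ correctly so that $B$ comes out unimodular. This relies on the precise combinatorics of the snail and, in particular, on the signs and multiplicities from Lemma~\ref{Divideform}; getting the diagonal entries of $B$ to be exactly $\pm 1$ rather than some larger integer is what makes the argument work, and this is sensitive to the exact adjacency pattern, not just to its rough shape. A reasonable fallback, should the direct construction prove too intricate, is an inductive approach --- the snail with $n$ double points contains the snail with $n-1$ double points after peeling off an outermost loop, and one might enlarge an inductively constructed sublattice by adjoining one well-chosen pair of difference classes per step.
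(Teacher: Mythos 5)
Your Steps 1 and 3 are fine as far as they go, but the entire content of the proposition sits in your Step 2, which you leave as an ``aim'' --- and the specific candidate you propose for~$V$ is not correct. Two concrete problems. First, the double-point differences are not isotropic: distinct double-point curves have disjoint projections to the disc, hence zero linking in both directions, while each has self-linking~$1$, so $S(\gamma_{i+1}-\gamma_i,\gamma_{i+1}-\gamma_i)=2$; thus these classes can never serve as one of the two zero diagonal blocks in your target shape $\bigl(\begin{smallmatrix}0&B\\0&0\end{smallmatrix}\bigr)$. Second, and more decisively, the \emph{span} you propose already fails for~$n=2$: reading the incidence data of the snail off Lemma~\ref{Divideform} (the innermost black region meets~$\gamma_1$ once and shares one edge with the second, white, region, which meets~$\gamma_1$ twice and~$\gamma_2$ once), the basis $(\alpha_2-\alpha_1,\ \gamma_2-\gamma_1)$ gives
\[
A'=\begin{pmatrix}1&1\\-1&2\end{pmatrix},\qquad \det\bigl(tA'-A'^{\top}\bigr)=3t^2-2t+3,
\]
which is not a unit (its value at $t=1$ is $4$; equivalently, the intersection form on your span has determinant $4$ rather than $\pm1$). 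Since an integral change of basis of~$V$ multiplies $\det(tA-A^{\top})$ by $\det(P)^2=1$, no recombination of your generators, and no choice of signs, can repair this. So the ``delicate point'' you flag is a genuine gap, not a technicality, and the fallback induction is likewise only a plan.

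For comparison, the paper's choice mixes the two types of curves: $V$ is spanned by $a_i=\alpha_{i+1}-\gamma_i$ together with the \emph{plain} double-point curves $b_i=\gamma_{i+1}$, for $i=1,\dots,n-1$. With Lemma~\ref{Divideform} one gets $S(a_i,a_j)=0$ for all $i,j$, $S(a_i,b_j)=S(b_j,a_i)=0$ for $j>i$, and the diagonal pairs $(S(a_i,b_i),S(b_i,a_i))$ alternate between $(0,1)$ and $(1,0)$ with the parity of~$i$. Note that the $b$--$b$ block is \emph{not} zero (for instance $S(b_i,b_i)=1$), so the shape you aim for in Step~3 is not what occurs; instead, in $tA-A^{\top}$ the $a$--$a$ block vanishes and the two off-diagonal blocks are triangular with diagonal entries $t$ or $-1$, and expanding the first row and column inductively yields $\det(tA-A^{\top})=\pm t^{\,n-1}$. (For $n=2$ this choice gives $A=\bigl(\begin{smallmatrix}0&0\\1&1\end{smallmatrix}\bigr)$ and $\det(tA-A^{\top})=t$, in contrast with the computation above.) To complete your argument you would need to replace your generators by some such correct choice and verify the corresponding form explicitly.
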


Given Proposition~\ref{Alextrivial}, we finish the proof of Theorem~\ref{trefoil_thm} by applying 
the following proposition of Baader, Feller, Lewark and the author~\cite{BFLL} to the subgroup~$V$.

\begin{prop}[Proposition~3 in~\cite{BFLL}]
\label{bound}
Let~$L$ be a link with a Seifert surface~$\Sigma$ and associated Seifert form~$S$. 
If~$V\subset \mathrm{H}_1(\Sigma;\mathbf{Z})$ is a subgroup so that for a matrix~$A$ of~$S$ 
restricted to~$V$,~$\det(tA-A^\top)$ is a unit in~$\mathbf{Z}[t^{\pm1}]$, then the topological four-genus of~$L$ 
is bounded from above by~$g(\Sigma)-\mathrm{rk}(V)/2$.
\end{prop}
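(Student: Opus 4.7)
The plan is to use Freedman's disc theorem~\cite{Free} to cut out of $\Sigma$ a subsurface whose first homology is $V$, leaving behind a locally flat surface in $D^{4}$ bounded by $L$ of genus $g(\Sigma)-\mathrm{rk}(V)/2$.

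A first observation is that $\mathrm{rk}(V)$ must be even: the hypothesis that $\det(tA-A^{\top})$ is a unit in $\mathbf{Z}[t^{\pm 1}]$ specialises at $t=1$ to $\det(A-A^{\top})=\pm 1$, and the skew-symmetric form $A-A^{\top}$ then automatically has even rank. Writing $\mathrm{rk}(V)=2s$, I would next realise $V$ as the first homology of a subsurface $F\subset\Sigma$ of genus $s$ with connected boundary. Using the unimodularity of the intersection form $A-A^{\top}$ on $V$, pick a symplectic basis of $V$ and represent it by $s$ pairs of simple closed curves $(a_{i},b_{i})$ on $\Sigma$ that are pairwise disjoint except that $a_{i}$ meets $b_{i}$ transversally once, possibly after stabilising $\Sigma$ by cancelling $1$-handles (which preserves $g(\Sigma)$). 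A regular neighbourhood of the $a_{i}\cup b_{i}$ connected by arcs then yields such an $F$, with boundary a knot $J\subset S^{3}$.

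By construction, $F$ is a Seifert surface for $J$ whose Seifert form is precisely the restriction $A$; hence the Alexander polynomial of $J$ equals $\det(tA-A^{\top})$ up to a unit in $\mathbf{Z}[t^{\pm 1}]$, which is the trivial polynomial. Freedman's disc theorem then supplies a locally flat embedded disc $D\subset D^{4}$ with $\partial D=J$. Pushing $\Sigma$ into $D^{4}$ via its collar, cutting out the pushed copy of $F$, and gluing in $D$ along $J$ produces a locally flat surface in $D^{4}$ bounded by $L$ of genus $g(\Sigma)-s$, which gives the claimed bound on the topological four-genus of $L$.

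The main obstacle is the geometric realisation of $V$ as the first homology of an embedded subsurface of $\Sigma$ with connected boundary. The algebraic existence of a symplectic basis is immediate from the unimodularity of $A-A^{\top}$, but representing it by an \emph{embedded} family of simple closed curves with the right intersection pattern on $\Sigma$ takes some surface-topological work, and extra care is needed in the final surgery to ensure that the disc $D$ can be arranged disjoint from the rest of the pushed $\Sigma$ in $D^{4}$ so that the cut-and-paste produces a genuinely locally flat surface. Once these geometric ingredients are in place, the remaining argument is formal.
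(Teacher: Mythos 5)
First, note that the paper does not prove this statement at all: it is quoted verbatim as Proposition~3 of~\cite{BFLL}, so the comparison is with the proof given there. Your overall strategy --- realise $V$ by a genus-$s$ subsurface $F\subset\Sigma$ with connected boundary $J$, observe that the Seifert matrix of $F$ is $A$ so that the Alexander polynomial of $J$ is trivial, apply Freedman, and replace $F$ by a slice disc for $J$ --- is essentially the strategy of the cited proof. Your reduction steps are sound: $\det(A-A^\top)=\pm1$ forces $\mathrm{rk}(V)$ to be even and, moreover, forces $V$ to be a primitive subgroup on which the intersection form is unimodular, so that $\mathrm{H}_1(\Sigma;\mathbf{Z})$ splits as $V\oplus V^{\perp}$ and a symplectic basis of $V$ can be realised by simple closed curves with the required geometric intersection pattern via the change-of-coordinates principle (surjectivity of the mapping class group onto the symplectic group in the knot case; for links, where the intersection form is degenerate, a little more care is needed). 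Your remark about ``stabilising $\Sigma$ by cancelling $1$-handles'' is, however, neither needed nor admissible: $\Sigma$ is a fixed embedded surface whose genus enters the bound, and all you may use are homeomorphisms of $\Sigma$ itself.

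The genuine gap is the final step. Freedman's disc theorem as you invoke it only provides \emph{some} locally flat disc $D\subset D^4$ with $\partial D=J$; it gives no control on the position of $D$ relative to the pushed-in copy of $\Sigma\setminus F$, and ``gluing in $D$'' is meaningless unless $D$ meets that surface only along $J$, with the correct normal data near $J$. This disjointness cannot be arranged by general position: a disc and a surface in $D^4$ generically intersect in points which you have no means to remove. The argument in~\cite{BFLL} (following Feller's earlier work~\cite{Feller}) uses the full strength of Freedman's theorem~\cite{Free}: an Alexander-polynomial-one knot bounds a locally flat disc $\Delta$ whose complement has fundamental group $\mathbf{Z}$, generated by a meridian. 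It is this condition on $\pi_1(D^4\setminus\Delta)$ that allows one to place the remainder of the surface in the exterior of $\Delta$ (a homology $S^1\times D^3$) and thus perform the cut-and-paste ambiently. You flag this point as ``extra care'', but it is precisely where the proof lives; without that argument the surgery step does not go through, so the proposal is incomplete at its crucial juncture.
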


Applying Proposition~\ref{bound} to the subgroup~$V$ implies that the topological four-genus of~$K_n$ 
is bounded from above by~$n-\frac{2n-2}{2} = 1$. By a result of Kauffman and Taylor~\cite{KT}, the signature 
is a lower bound for twice the topological four-genus. Therefore, the equality~$g_4^\mathrm{top}(K_n) = 1$ 
follows from the fact that the signature of any nontrivial divide knot is bounded from below by~2. 
The argument for this is straightforward and sketched by the author~\cite{L1}. 
It remains to prove~Proposition~\ref{Alextrivial}.

\begin{proof}[Proof of Proposition~\ref{Alextrivial}]
In order to obtain the canonical Seifert surface for the snail divide knot~$K_n$, we must checkerboard 
colour the complement of the snail divide.
We do so in such a way that the innermost region is coloured black, as shown in Figure~\ref{snail4} for the case~$n=4$.
\begin{figure}[h]
\begin{center}
\def\svgwidth{130pt}
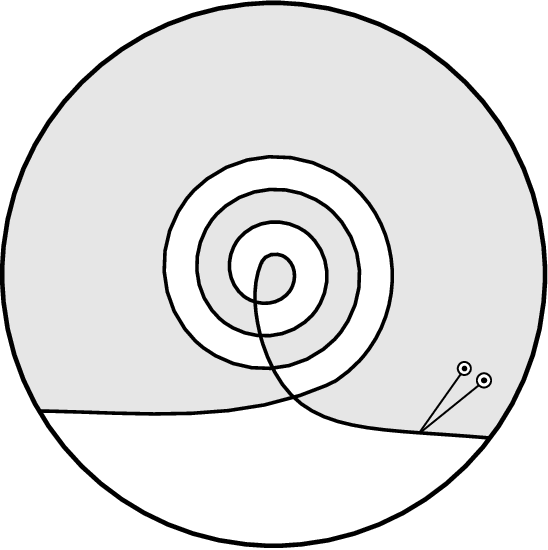
\caption{The snail divide with four double points and our chosen checkerboard colouring of the complement.}
\label{snail4}
\end{center}
\end{figure}
We now recall the basis of the canonical Seifert surface of a divide knot obtained by 
Ishikawa's plumbing construction. There is one basis curve for every inner region of~$D\setminus P$, 
and one basis curve for every double point. In our case of the snail divide knot~$K_n$, 
let~$\alpha_1,\dots,\alpha_n$ be the curves corresponding to the inner regions, from inside out. 
Furthermore, let~$\gamma_1,\dots,\gamma_n$ be the curves corresponding to the double points,
from inside out. Let~$V$ be the subgroup of~$\mathrm{H}_1(\Sigma;\mathbf{Z})$ generated by~$a_1,\dots,a_{n-1}, b_1,\dots,b_{n-1}$, 
where~$a_i = \alpha_{i+1} - \gamma_i$ and~$b_i = \gamma_{i+1}$. First of all, we note that these 
elements together with~$\alpha_1$ and~$\gamma_1$ form a basis of~$\mathrm{H}_1(\Sigma;\mathbf{Z})$,
so indeed~$V$ is a subgroup of rank~$2n-2$. We now describe enough of a matrix~$A$ for the Seifert form~$S$
restricted to~$V$ to show that~$\det(tA-A^\top)$ is a unit in~$\mathbf{Z}[t^{\pm 1}].$ \smallskip

\noindent
Using Lemma~\ref{Divideform}, we compute the following values of the Seifert form~$S$:

\begin{enumerate}
\item $S(a_i,a_j) = 0$ for all~$i,j$,
\item $S(a_i,b_i) = 0$ if~$i$ is odd and~$S(a_i,b_i) = 1$ if~$i$ is even,
\item $S(b_i,a_i) = 1$ if~$i$ is odd and~$S(b_i,a_i)=0$ if~$i$ is even,
\item $S(a_i,b_j) = 0$ and~$S(b_j,a_i) = 0$ for~$j>i$.
\end{enumerate}

\begin{figure}[h]
\begin{center}
\def\svgwidth{300pt}
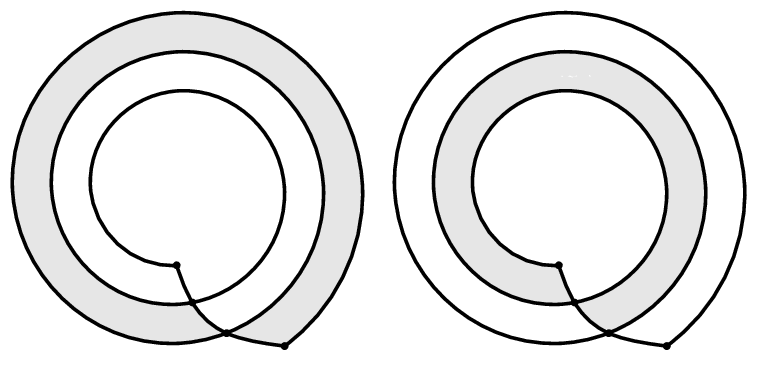
\caption{Two adjacent regions of a snail divide, with checkerboard colouring depending on whether~$i$ 
is even (left) or odd (right). The double points and the regions are labelled with the curves that correspond 
to them in Ishikawa's plumbing construction.}
\label{seifertcompute}
\end{center}
\end{figure}

\noindent
Even though the calculation of these values is straightforward, we provide it in full detail.
\smallskip

\noindent
\emph{Calculation of~(1):}
We first compute 
\begin{align*} S(a_i,a_i) &= S(\alpha_{i+1} - \gamma_i, \alpha_{i+1} - \gamma_i) \\
 &= S(\alpha_{i+1}, \alpha_{i+1}) - S(\alpha_{i+1},\gamma_i) - S(\gamma_i, \alpha_{i+1}) + S(\gamma_i,\gamma_i)\\
 & = 1 - S(\alpha_{i+1},\gamma_i) - S(\gamma_i, \alpha_{i+1}) + 1.
\end{align*}
Now, depending on whether~$\alpha_{i+1}$ corresponds to a black or a white region, respectively, 
we have~$S(\alpha_{i+1},\gamma_i) = 2$ and~$S(\gamma_i, \alpha_{i+1}) = 0$ or vice versa, respectively. 
This follows from Lemma~\ref{Divideform} and the fact that the vertex~$v$ corresponding to~$\gamma_i$ 
appears twice along the boundary of the region corresponding to~$\alpha_{i+1}$, see Figure~\ref{seifertcompute}. 
In any case, we obtain~$S(a_i,a_i)=0$. 
Next, we note that~$S(a_i,a_{j}) = S(a_{j},a_i) = 0$ for~$|i-j|\ge 2$. This follows directly from the definition of the~$a_i$ 
and Lemma~\ref{Divideform}. In order to finish the proof of~(1), the only values we have to compute 
are~$S(a_i,a_{i+1})$ and~$S(a_{i+1},a_i)$. We have
\begin{align*} S(a_i,a_{i+1}) &= S(\alpha_{i+1} - \gamma_i, \alpha_{i+2} - \gamma_{i+1}) \\
 &= S(\alpha_{i+1}, \alpha_{i+2}) - S(\alpha_{i+1},\gamma_{i+1}) - S(\gamma_i, \alpha_{i+2}) + S(\gamma_i,\gamma_{i+1})\\
 & = S(\alpha_{i+1}, \alpha_{i+2}) - S(\alpha_{i+1},\gamma_{i+1}) - S(\gamma_i, \alpha_{i+2}).
\end{align*}
If~$\alpha_{i+1}$ corresponds to a black region, then by Lemma~\ref{Divideform} this sum reads as follows:~$2 - 1 - 1 = 0$. 
On the other hand, if~$\alpha_{i+1}$ corresponds to a white region, then the sum reads~$0-0-0 = 0$. 
Similarly, we have 
\begin{align*} S(a_{i+1},a_i) &= S(\alpha_{i+2} - \gamma_{i+1}, \alpha_{i+1} - \gamma_{i}) \\
 &= S(\alpha_{i+2}, \alpha_{i+1}) - S(\alpha_{i+2},\gamma_{i}) - S(\gamma_{i+1}, \alpha_{i+1}) + S(\gamma_{i+1},\gamma_{i})\\
 & = S(\alpha_{i+2}, \alpha_{i+1}) - S(\alpha_{i+2},\gamma_{i}) - S(\gamma_{i+1}, \alpha_{i+1}).
\end{align*}
If~$\alpha_{i+2}$ corresponds to a black region, then by Lemma~\ref{Divideform} this sum reads as follows:~$2 - 1 - 1 = 0$. 
On the other hand, if~$\alpha_{i+2}$ corresponds to a white region, then the sum reads~$0-0-0 = 0$. 
This finishes the proof of~(1). \smallskip

\noindent
\emph{Calculation of~(2):} We have
\begin{align*} S(a_i,b_i) &= S(\alpha_{i+1} - \gamma_{i}, \gamma_{i+1}) \\
 &= S(\alpha_{i+1}, \gamma_{i+1}) - S(\gamma_{i}, \gamma_{i+1}) = S(\alpha_{i+1}, \gamma_{i+1}).
\end{align*}
By Lemma~\ref{Divideform}, this equals~~$0$ or $1$, respectively, if~$\alpha_{i+1}$ corresponds to a white or a black region, 
respectively, that is, when~$i$ is odd or even, respectively. This proves~(2).
\smallskip

\noindent
\emph{Calculation of~(3):} We have
\begin{align*} S(b_i,a_i) &= S(\gamma_{i+1}, \alpha_{i+1} - \gamma_{i}) \\
 &= S(\gamma_{i+1}, \alpha_{i+1}).
\end{align*}
By Lemma~\ref{Divideform}, this equals~~$1$ or $0$, respectively, if~$\alpha_{i+1}$ corresponds to a white or a black region, 
respectively, that is, when~$i$ is odd or even, respectively. This proves~(3).
\smallskip

\noindent
\emph{Calculation of~(4):} We note that if~$j>i$, then 
\begin{align*} S(a_i,b_j) &= S(\alpha_{i+1} - \gamma_{i}, \gamma_{j+1}) \\
 &= S(\alpha_{i+1}, \gamma_{j+1}) = 0,
\end{align*}
since no vertex corresponding to a curve~$\gamma_{j+1}$ appears in the boundary of a region 
corresponding to~$\alpha_{i+1}$ if~$j>i$, compare with Figure~\ref{seifertcompute}. We have
\begin{align*} S(b_j,a_i) &= S(\gamma_{j+1},\alpha_{i+1} - \gamma_{i}) \\
 &= S(\gamma_{j+1}, \alpha_{i+1}) = 0
\end{align*}
for the same reason. This proves~(4).
\smallskip

While the values computed above in~(1)-(4) do not give a complete description 
of the Seifert form~$S$ restricted to~$V$, they suffice to deduce that the matrix~$A$ for~$S$ 
restricted to~$V$ with respect to the basis~$a_1,\dots,a_{n-1},b_1,\dots,b_{n-1}$ is of the form
\[
A =\ \begin{blockarray}{crrrrrrrc}
    \begin{block}{(crrr|rrrrc)}
      \ & & \ & \  & 0 &  & & \\
      \ & & \ & \ & \ast  & 1 & & \\ 
       & \ &  & \ & \ast & \ast & 0 & \\
       & \ & & \ & \ast & \ast & \ast & \ddots \\  \BAhhline{-------------~}
      1 & \ast & \ast & \ast  &  &  & & \\
       & 0 & \ast & \ast &  &  & &  \\
      & & 1 & \ast&   &  & \ast & \\
       & &  & \ddots &   &  & & \\
    \end{block}
  \end{blockarray}
\]
where the blocks are of size~$(n-1)\times(n-1)$ and all non-indicated entries are zeros.
It follows that~$tA-A^\top$ is of the form
\[
tA-A^\top =\ \begin{blockarray}{crrrrrrrc}
    \begin{block}{(crrr|rrrrc)}
      \ & & \ & \  & -1 &  & & \\
      \ & & \ & \ & \ast  & t & & \\ 
       & \ &  & \ & \ast & \ast & -1 & \\
       & \ & & \ & \ast & \ast & \ast & \ddots \\  \BAhhline{-------------~}
      t & \ast & \ast & \ast  &  &  & & \\
       & -1 & \ast & \ast &  &  & &  \\
      & & t & \ast&   &  & \ast & \\
       & &  & \ddots &   &  & & \\
    \end{block}
  \end{blockarray}
\]
and thus has determinant~$\pm t^{n-1}$. This can be seen inductively by developing the first 
row and column. In particular,~$\det(tA-A^\top)$ is a unit in~$\mathbf{Z}[t^{\pm1}]$.
This finishes the proof.
\end{proof}

\end{document}